\newcommand{\be}{\begin{eqnarray}}
\newcommand{\ee}{\end{eqnarray}}
\newtheorem{theo}{Theorem}
\newtheorem{prop}{Proposition}
\newtheorem{lemma}{Lemma}
\newcommand{\R}{\mathbb R}
\begin{document}
\date{June 13, 2023}

\title{Formation of Singularities in Solutions to \\ Ruggeri's Hyperbolic Navier-Stokes Equations}
\author{Heinrich Freist\"uhler}
\maketitle
A most prominent feature of hyperbolic systems of \emph{conservation laws} 
\be
U_t+F(U)_x=0, 
\ee 
$F:\mathcal U\to\R^n, \mathcal U\subset\R^n$,
is the property that many smooth solutions to them develop singularities in finite time. Such solutions exist
whenever the system has a genuinely nonlinear mode, i.e., there is a smooth (eigenvalue,eigenvector) pair 
$(\lambda,r):\mathcal U_*\to\R\times\R^n \setminus\{0\}$,
\be\label{mode}
(DF(U)-\lambda(U))r(U)=0,\quad U\in \mathcal U_*,
\ee
defined on an open neighborhood $\mathcal U_*\subset\mathcal U$ 
of some state
$U_*\in\mathcal U$ with  
\be\label{GNL}
(r(U)\cdot\nabla\lambda(U))|_{U=U_*}\neq 0.
\ee    
The condition \eqref{GNL} of \emph{genuine nonlinearity} goes back to Lax \cite{L57}, and first general results on 
singularity formation to him \cite{LSing}, John \cite{J}, and Liu \cite{Liu}. 

For systems of \emph{balance laws} 
\be
U_t+F(U)_x=G(U), 
\ee 
with source $G:\mathcal U\to\R^n$, one might wonder whether the same holds notably if the reference state is an 
equilibrium, i.e., satisfies
\be\label{EL}
G(U_*)=0.
\ee
In fact, typical physically interesting sources $G$ are \emph{damping} in such a way that arbitrary smooth data $U_0:\R\to \mathcal U$ for which 
an appropriate norm of $U_0(\cdot)-U_*$ is sufficiently small lead to solutions that remain smooth for all times $t>0$
\cite{D,KY}. However, B\"arlin has recently shown
\begin{prop}[\cite{B}]
Under the sole assumptions  \eqref{EL} and \eqref{GNL} there exist, for any Euclidean neighborhood $\mathcal U_*$ of $U_*$,
smooth data $U_0:\R\to \mathcal U_*$ such that the associated regular solution, while its values stay in $\mathcal U_*$, 
ceases to be differentiable after finite time.   
\end{prop}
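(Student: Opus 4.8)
The plan is to run, near the equilibrium, the classical characteristic/Riccati mechanism that produces gradient blow‑up for genuinely nonlinear modes of conservation laws (Lax \cite{LSing}, John \cite{J}, Liu \cite{Liu}), and to observe that a source of the general form allowed here can slow this down by at most a \emph{bounded linear} rate in the gradient, which is overwhelmed by the quadratic steepening once the initial gradient is large enough. Accordingly I would choose data that are a perturbation of $U_*$ of small amplitude $\eps$ but of fixed, suitably large gradient, concentrated on an $x$-interval of width $\sim\eps$: the small amplitude (together with finite speed of propagation) keeps the solution inside $\mathcal U_*$, while the large gradient forces it to steepen into a singularity before that.

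\textbf{Reduction.} Working in a neighborhood of $U_*$ in which the system is hyperbolic and $\lambda=\lambda_k$ is a simple eigenvalue, let $\lambda_1<\dots<\lambda_n$, $r_1,\dots,r_n$, $\ell_1,\dots,\ell_n$ be the eigenvalues and the bi‑orthonormal right and left eigenvectors, normalised (replacing $r$ by $-r$ if necessary) so that $a_*:=(\nabla\lambda_k\cdot r_k)(U_*)>0$; then $a(U):=(\nabla\lambda_k\cdot r_k)(U)\ge a_*/2$ nearby. For smooth data there is a unique maximal classical solution on $[0,T_{\max})$, and $T_{\max}<\infty$ forces $\|U_x(\cdot,t)\|_\infty\to\infty$ as long as $U$ stays in a fixed compact subset of $\mathcal U$. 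Set $w_j:=\ell_j(U)\cdot U_x$, so $U_x=\sum_jw_jr_j$. Differentiating the balance law in $x$ and projecting onto the $\ell_j$ yields the usual transport–Riccati system; the features I need are: (a) along a $k$‑characteristic, $w_k'=-a(U)\,w_k^2+\big(\text{bilinear terms each carrying a factor }w_j,\ j\neq k\big)+\big(\text{terms linear in }w\big)$; (b) for $j\neq k$ the $w_j$‑equation contains \emph{no} $w_k^2$ term, since the contribution of $-\ell_jD^2F(r_k,r_k)$ and that of the convective part cancel because $\ell_j\cdot r_k\equiv0$; (c) because $G(U_*)=0$, the source enters (a)–(b) only through terms linear in $w$ with $O(1)$ coefficients plus terms carrying a factor $G(U)=O(|U-U_*|)$, and makes $U$ drift along $k$‑characteristics only at rate $O(|U-U_*|)$ via $U'=G(U)+\sum_{j\neq k}(\lambda_k-\lambda_j)w_jr_j$. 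In particular, relative to the source‑free case $G$ adds to the $w_k$‑Riccati only a bounded linear term $s_k(U)\,w_k$, $|s_k|\le B$.

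\textbf{Data and bootstrap.} Fix $\Phi\in C_c^\infty(\R)$ with $\|\Phi\|_\infty\le1$ and $\Phi'(0)=-1$; fix a sufficiently large constant $W$ (a fixed multiple of $\max(B/a_*,1)$); for small $\eps>0$ put $\mu:=\eps/W$ and $U_0:=U_*+\eps\,\Phi(\cdot/\mu)\,r(U_*)$. Then $\|U_0-U_*\|_\infty=O(\eps)$ (so $U_0:\R\to\mathcal U_*$ for $\eps$ small), $w_k(\cdot,0)=-W(1+O(\eps))$ near the origin while $w_j(\cdot,0)=O(\eps W)=O(\eps)$ for $j\neq k$, $\|U_{0,x}\|_{L^1}=O(\eps)$, and the perturbation stays compactly supported. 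On $[0,T_{\max})$ I would close, with $\eps$‑independent constants, a mutual bootstrap: (1) $\|w_j(\cdot,t)\|_\infty\lesssim\eps$ for $j\neq k$, by Gronwall along $j$‑characteristics — the only potentially large inputs, $s_j w_k$ and $p_j w_j w_k$, integrate to $O(\eps)$ along a $j$‑characteristic because that curve is transversal ($|\lambda_j-\lambda_k|\ge\delta_0>0$) to the $k$‑wave; (2) $\|w_k(\cdot,t)\|_{L^1_x}\lesssim\eps$, because in the mass coordinate along $k$‑characteristics the $w_k^2$ term drops out, yielding $\tfrac{d}{dt}\|w_k\|_{L^1_x}\lesssim\|w_k\|_{L^1_x}+\eps$; (3) hence $\|U(\cdot,t)-U_*\|_\infty\le\|U_x(\cdot,t)\|_{L^1}\lesssim\eps$, so $U$ remains in $\mathcal U_*$. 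With (1) and (3) the Riccati along the characteristic through the origin becomes $w_k'\le-\tfrac{a_*}{2}w_k^2+(B+1)|w_k|+1$, and since $w_k(0)=-W(1+O(\eps))$ lies below the corresponding blow‑up threshold (a fixed multiple of $B/a_*$), a one‑line comparison gives $w_k\to-\infty$ at some finite $t^*\le C/(a_*W)$. Therefore $T_{\max}\le t^*$, $U(\cdot,t)\in\mathcal U_*$ on $[0,T_{\max})$, and by the continuation principle $\|U_x(\cdot,t)\|_\infty\to\infty$ as $t\uparrow T_{\max}$: differentiability is lost after the finite time $T_{\max}$ while $U$ stays in $\mathcal U_*$.

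\textbf{Main obstacle.} Everything above except one point is routine and parallels the source‑free case: the local existence theory, the algebra of the transport–Riccati system and the $w_k^2$‑cancellation, the scalar comparison, and the check that $DG$ contributes only admissible lower‑order terms. The delicate step is the bootstrap, where two scales must be reconciled: $\|U_x\|_\infty$ is of fixed size $\sim W$, yet $U$ must not drift out of the prescribed neighborhood $\mathcal U_*$. This is possible because that large gradient occupies an $x$‑interval of width $O(\mu)=O(\eps)$ carrying only $O(\eps)$ of $L^1$‑mass, so that — by the transversality of the other characteristic families to the $k$‑wave — its effect on $U$ and on the subordinate modes $w_{j\neq k}$ accumulates to only $O(\eps)$ over the bounded lifespan; making items (1)–(2) quantitative is the real work. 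That the source $G$ cannot prevent blow‑up is then transparent: it adds to the Riccati only a bounded linear term and to $U$ only an $O(|U-U_*|)$ drift, neither of which can compete, for $W$ large and $\eps$ small, with the quadratic steepening guaranteed by \eqref{GNL}.
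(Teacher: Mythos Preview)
The paper does not prove Proposition~1: it is quoted as a result of B\"arlin \cite{B} and then used as a black box to derive Theorems~1 and~2 from Lemmas~1 and~2. There is therefore no proof in this paper against which to compare your attempt.

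On its own merits your outline is the expected one and is essentially sound. The $w_k^2$-cancellation in the $w_j$-equations for $j\neq k$ that you invoke in (b) is exactly John's structural identity (your one-line justification via $\ell_j\cdot r_k\equiv 0$ is correct once one also differentiates $DF\,r_k=\lambda_k r_k$ in the $r_k$-direction), and your key observation---that under \eqref{EL} the source enters the Riccati system only through a bounded \emph{linear} term $DG(U)U_x$ plus contributions carrying a factor $G(U)=O(|U-U_*|)$---is precisely what reduces the problem to the classical source-free mechanism. The scaling $\mu=\eps/W$ and the $L^1$-bootstrap for $w_k$ are the right devices to keep $U$ inside $\mathcal U_*$ while the gradient blows up.

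One caveat: you tacitly assume strict hyperbolicity by writing $\lambda_1<\dots<\lambda_n$, and you use it in bootstrap step (1) through the transversality bound $|\lambda_j-\lambda_k|\ge\delta_0>0$. The Proposition as stated assumes only the existence of \emph{one} smooth genuinely nonlinear mode \eqref{mode}, not that it is simple; in full generality one must either argue separately that the mode can be taken simple, or treat the case where some $\lambda_j$ coincides with $\lambda_k$. For the purposes of the present paper this gap is harmless, since the fast modes singled out in Lemmas~1 and~2 are simple eigenvalues.
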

In the present note we consider the hyperbolic compressible Navier-Stokes equations 
\be\label{EOM}
\begin{aligned}
\rho_t+(\rho u)_x&=0\\
(\rho u)_t+(\rho u^2 +p(\rho,\theta)+\sigma)_x&=0\\
(\rho (e(\rho,\theta)+|u|^2/2))_t
+((\rho(e(\rho,\theta)+|u|^2/2)+p(\rho,\theta)+\sigma)u)_x&=0\\
\epsilon\left((\rho\sigma/\theta)_t+
(\rho u \sigma/\theta)_x\right) 
+ u_x&=-\sigma/\eta.
\end{aligned}
\ee
with $p=R\theta\rho, e=c\theta$, and show the following consequence of Proposition 1. 
\begin{theo}
For any reference state $(\rho_*,u_*,\theta_*,\sigma_*)$ with $\rho_*,\theta_*>0$ and $\sigma_*=0$
and any $k>0$, there exist $C^\infty$ data $(\rho_0,u_0,\theta_0,\sigma_0)$ such that with them, the associated smooth
solution $(\rho,u,\theta,\sigma)$ to \eqref{EOM} assumes values in the ball of radius $k$ around $(\rho_*,u_*,\theta_*,\sigma_*)$
while it ceases to be differentiable after a finite time. 
\end{theo}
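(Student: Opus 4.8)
The plan is to recast \eqref{EOM} as a system of balance laws $U_t+F(U)_x=G(U)$ to which Proposition~1 applies at the prescribed reference state, and then to transport the singular solution it produces back to the original variables. As conserved variables take $U=(\rho,\ \rho u,\ \rho(c\theta+|u|^2/2),\ \rho\sigma/\theta)$; with them the first three fluxes are the familiar gas-dynamical ones but with $p+\sigma$ in place of $p$, the fourth flux is $\rho u\sigma/\theta+u/\epsilon=(U_2/U_1)(U_4+1/\epsilon)$, and the source is $G(U)=(0,0,0,-\sigma/(\epsilon\eta))$, where $\epsilon,\eta>0$ are the constants in \eqref{EOM}. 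On the open set $\mathcal O=\{\rho>0,\ \theta>0\}$, which contains every admissible reference state, the map $\Phi:U\mapsto(\rho,u,\theta,\sigma)$ is a $C^\infty$ diffeomorphism onto its image, so the balance-law form, the quasilinear form in $(\rho,u,\theta,\sigma)$, and \eqref{EOM} are equivalent over $\mathcal O$ for $C^1$ solutions. Hypothesis \eqref{EL} is then immediate: $\sigma_*=0$ gives $G(U_*)=0$.

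The substantive step is \eqref{GNL}. Since genuine nonlinearity is unchanged under $C^\infty$ changes of the dependent variable, I would work in the primitive variables $V=(\rho,u,\theta,\sigma)$ and put \eqref{EOM} in quasilinear form $A_0(V)V_t+A_1(V)V_x=(\text{source})$. A computation of $\det(A_1-\lambda A_0)$ shows that at $U_*$ the four characteristic speeds are $u_*$ (a double one) together with two simple ``acoustic'' speeds $u_*\pm a_*$, where $a_*^2=\gamma R\theta_*+\theta_*/(\epsilon\rho_*^2)$ with $\gamma:=1+R/c$; being simple, $\lambda_+:=u+a$ and a corresponding eigenvector $r_+$ depend $C^\infty$ on $U$ near $U_*$, and there $a^2=\gamma R\theta+\theta/(\epsilon\rho^2)+2R\sigma/(c\rho)+O(\sigma^2)$. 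In the variables $V$ one finds $r_+$ parallel to $(\rho,\ a,\ R\theta/c,\ \theta/(\epsilon\rho))$ — in particular it has a nonzero $\sigma$-component, which is why the first-order $\sigma$-term of $a^2$ must be retained. Evaluating $r_+\cdot\nabla\lambda_+$ at $U_*$ and collecting the four contributions, one obtains
\[
\frac{R\,\theta_*}{2c\,a_*}\,\Big(\gamma\,(2c+R)+\frac{3}{\epsilon\rho_*^{2}}\Big),
\]
which is strictly positive; hence $(\lambda_+,r_+)$ is a genuinely nonlinear mode at $U_*$ and \eqref{GNL} holds. I expect this eigenvalue/eigenvector computation — and especially the bookkeeping of the $\sigma$-dependence of the acoustic speed — to be the main obstacle; the pleasant feature is that no cancellation occurs and the sign is unconditional in $R,c,\epsilon,\rho_*,\theta_*>0$. (The four real speeds $u,u,u\pm a$, together with the convex entropy built into Ruggeri's model, make \eqref{EOM} symmetric hyperbolic near the equilibrium $U_*$, which secures the existence of the regular solution referred to in Proposition~1.)

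It remains to conclude. Given $k>0$, continuity of $\Phi^{-1}$ at $\Phi(U_*)=(\rho_*,u_*,\theta_*,0)$ lets me pick a Euclidean neighbourhood $\mathcal U_*$ of $U_*$, with compact closure in $\mathcal O$, so small that $\Phi(\mathcal U_*)$ is contained in the ball of radius $k$ about $(\rho_*,u_*,\theta_*,\sigma_*)$. By the foregoing, Proposition~1 applies and furnishes $C^\infty$ data $U_0:\R\to\mathcal U_*$ whose regular solution $U$ keeps its values in $\mathcal U_*$ and ceases to be differentiable after a finite time $T$. Put $(\rho_0,u_0,\theta_0,\sigma_0):=\Phi\circ U_0$ and $(\rho,u,\theta,\sigma):=\Phi\circ U$. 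Then these are $C^\infty$ data; $(\rho,u,\theta,\sigma)$ solves \eqref{EOM}; its values lie in $\Phi(\mathcal U_*)$, hence in the ball of radius $k$ around $(\rho_*,u_*,\theta_*,\sigma_*)$; and since $\Phi$ and $\Phi^{-1}$ are $C^\infty$ on $\mathcal O$, it fails to be differentiable at $T$ precisely because $U$ does. This is the assertion of the theorem.
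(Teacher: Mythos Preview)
Your argument is correct and follows essentially the same route as the paper: you verify that the acoustic mode $\lambda_+=u+a$ is genuinely nonlinear at the equilibrium (this is exactly Lemma~1, with the same eigenvector and the same expression $a^2=\gamma R\theta+\theta/(\epsilon\rho^2)$) and then invoke B\"arlin's Proposition~1. The only differences are cosmetic---you compute $r_+\cdot\nabla\lambda_+$ exactly rather than bounding it from below, and you spell out the passage between conserved and primitive variables that the paper leaves implicit.
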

In equations \eqref{EOM}, $\rho,\theta,u,\sigma$ denote the local (extended) density, temperature, velocity, and stress
of the fluid 
where 
$\eta>0$ and $\epsilon>0$ are the coefficient of viscosity and a retardation parameter.    
These equations 
are the obvious no-heat-conduction variant of Ruggeri's 1983  
``symmetric-hyperbolic system of conservative equations for a viscous 
heat conducting fluid'' \cite{R83} and according to \cite{F22}, as a principal subsystem (in the sense of \cite{MR})
of Ruggeri's full model, a symmetric-hyperbolic system of Godunov-Boillat type \cite{Go,Bo}.

In view of B\"arlin's result, Theorem 1 is a consequence of the following fact.
\begin{lemma}
At every equilibrium state $(\rho_*,u_*,\theta_*,\sigma_*)$ with $\rho_*,\theta_*>0$ and $\sigma_*=0$, system
\eqref{EOM} has a mode \eqref{mode} which is genuinely nonlinear in the sense of \eqref{GNL}. 
\end{lemma}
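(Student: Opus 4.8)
The plan is to write \eqref{EOM} as a quasilinear first-order system in the primitive variables $V=(\rho,u,\theta,\sigma)$, compute its characteristic modes at the reference state, and verify \eqref{GNL} for one of them by direct calculation. Using the continuity equation to eliminate $\rho_t,\rho_x$ from the momentum equation, then the resulting momentum equation to simplify the energy equation, and expanding the last equation of \eqref{EOM}, one arrives at $A_0(V)V_t+A_1(V)V_x=G(V)$ with $A_0$ lower triangular and $\det A_0=\epsilon c\rho^3/\theta\neq 0$, so that the modes \eqref{mode} of the system are precisely the eigenpairs of $A_0^{-1}A_1$. Both $A_0$ and $A_1$ are explicit, with entries that are rational functions of $\rho,u,\theta,\sigma$ and the constants $R,c,\epsilon$.

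Next I would compute $\det(A_1-\lambda A_0)$. Writing $\lambda=u+\mu$, a factor $\mu^2$ (the mode $\lambda=u$) splits off, leaving a quadratic in $\mu^2$ whose roots are $\lambda_\pm=u\pm\bar\mu$ with
\[
\bar\mu^2=\frac{\theta}{\epsilon\rho^2}+R\theta+\frac{(R\theta\rho+\sigma)^2}{c\,\theta\rho^2}.
\]
At the reference state $\bar\mu^2=\theta_*/(\epsilon\rho_*^2)+R\theta_*(R+c)/c>0$, so $\bar\mu$ is a smooth positive function and $\lambda_+$ (likewise $\lambda_-$) a simple eigenvalue on a neighborhood $\mathcal U_*$ of $U_*$; hence $(\lambda_\pm,r_\pm)$ are smooth modes there. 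Solving $(A_1-\lambda_\pm A_0)r_\pm=0$ gives, up to scaling, $r_\pm=(\rho,\ \pm\bar\mu,\ R\theta/c,\ \theta/(\epsilon\rho))$ at $\sigma=0$.

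Finally I would check \eqref{GNL}. From the displayed formula for $\bar\mu^2$ one reads off, at $U_*$,
\[
\partial_\rho\bar\mu=-\frac{\theta}{\epsilon\rho^3\bar\mu},\qquad \partial_u\bar\mu=0,\qquad \partial_\theta\bar\mu=\frac{\bar\mu}{2\theta},\qquad \partial_\sigma\bar\mu=\frac{R}{c\rho\bar\mu},
\]
so that, with $\nabla\lambda_\pm=(\pm\partial_\rho\bar\mu,\,1,\,\pm\partial_\theta\bar\mu,\,\pm\partial_\sigma\bar\mu)$ and the identity $\bar\mu^2-\theta/(\epsilon\rho^2)=R\theta(R+c)/c$ valid at $U_*$,
\[
(r_\pm\cdot\nabla\lambda_\pm)\big|_{U_*}=\pm\Bigl(-\tfrac{\theta}{\epsilon\rho^2\bar\mu}+\bar\mu+\tfrac{R\bar\mu}{2c}+\tfrac{R\theta}{c\epsilon\rho^2\bar\mu}\Bigr)\Big|_{U_*}=\pm\frac{R}{c}\Bigl(\tfrac{\theta(R+c)}{\bar\mu}+\tfrac{\bar\mu}{2}+\tfrac{\theta}{\epsilon\rho^2\bar\mu}\Bigr)\Big|_{U_*}.
\]
Since $R,c,\epsilon>0$ and $\rho_*,\theta_*,\bar\mu_*>0$, the right-hand side is nonzero, which is exactly \eqref{GNL} for the mode $(\lambda_+,r_+)$ (and for $(\lambda_-,r_-)$).

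The only genuine labor is the $4\times4$ determinant and, more to the point, tracking the $\sigma$-dependence of $\bar\mu$: one might hope $\sigma$ drops out of $\bar\mu$ at equilibrium, but it does not, so the $\sigma$-component of $r_\pm$ really does contribute to $r_\pm\cdot\nabla\lambda_\pm$. The fortunate point is that after substituting the derivatives of $\bar\mu$ no cancellation occurs — all surviving terms have the same sign — so genuine nonlinearity holds at every such equilibrium with no extra hypotheses.
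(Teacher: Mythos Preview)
Your argument is correct and follows essentially the same route as the paper: primitive variables, the substitution $\lambda=u\pm\bar\mu$, the explicit formula for $\bar\mu^2$ (identical to the paper's \eqref{exprmu}), the same eigenvector up to scaling, and differentiation of $\bar\mu^2$ in $(\rho,\theta,\sigma)$ at $\sigma=0$. Two minor remarks: after factoring out $\mu^2$ the remaining factor is \emph{linear} in $\mu^2$ (quadratic in $\mu$), not ``quadratic in $\mu^2$''; and where the paper concludes via the crude bound $-2\mu\, r\!\cdot\!\nabla\lambda>-2/\rho+2\epsilon\rho\mu^2/\theta>0$, you instead simplify exactly using $\bar\mu^2-\theta/(\epsilon\rho^2)=R\theta(R+c)/c$, which is a slightly cleaner way to see that all terms line up with the same sign.
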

\begin{proof} 
We view \eqref{EOM} as 
$$
A^0(V)V_t+A^1(V)V_x=\tilde G(V)
$$
with $V=(\rho,u,\theta,\sigma)$ 
and look for values of 
$$
\mu=u-\lambda
$$
that make 
$$
M(V,\mu)=-\lambda A_0(V)+A_1(V) 
=
\begin{pmatrix}
\mu&\rho&0&0\\
p_\rho&\rho\mu&p_\theta&1\\
-\theta p_\theta\mu/\rho &\sigma&\rho e_\theta\mu&0\\
\epsilon\sigma\mu/\theta&1+\epsilon\rho\sigma/\theta
&-\epsilon\sigma\rho\mu/\theta^2&\epsilon\rho\mu/\theta
\end{pmatrix}
$$
singular. From
$$
\det M(V,\mu)=\epsilon\frac\rho\theta\mu^2\bigg(\mu^2\rho^2e_\theta-(\sigma p_\theta+\rho^2p_\rho e_\theta+\theta p_\theta^2)\bigg)
-\mu^2\bigg(\rho e_\theta  
+\epsilon\frac\rho\theta p_\theta\sigma+\epsilon\frac\rho{\theta^2}\sigma^2\bigg),
$$
we find that the nontrivial speeds of the left-hand side operator satisfy
\be\label{exprmu}
\mu^2\!=\!p_\rho
+\frac\theta{\epsilon\rho^2}
+\frac1{\rho^2e_\theta}
\bigg[\theta p_\theta^2+2p_\theta \sigma+\frac{\sigma^2}\theta\bigg]
\!=\!R\theta+\frac\theta{\epsilon\rho^2}+\frac1{\rho^2c}
\bigg[
R^2\rho^2\theta+2R\rho\sigma+\frac{\sigma^2}\theta
\bigg].
\ee
To determine modes at equilibrium, we set $\sigma =0$ and   
correspondingly look for $r\in\R^4\setminus\{0\}$ in the kernel of 
$$
M_0(V,\mu)=
\begin{pmatrix}
\mu&\rho&0&0\\
p_\rho&\rho\mu&p_\theta&1\\
-\theta p_\theta\mu/\rho &0&\rho e_\theta\mu&0\\
0&1&0&\epsilon\rho \mu/\theta
\end{pmatrix}
$$
with, by assumption, $\mu\neq 0$.\footnote{The double mode associated with $\mu=0$ is linearly degenerate.} 
A quick computation yields
\be\label{evec}
r\sim\left(\epsilon\frac{\rho^2}\theta,-\epsilon\frac\rho\theta\mu,
\epsilon\frac{p_\theta}{e_\theta},1\right)
\ee
with $\mu$ satisfying 
\be\label{exprmu0}
\mu^2=p_\rho+\frac{\theta p_\theta^2}{\rho^2e_\theta}+\frac\theta{\epsilon\rho^2}
\ee
(which latter indeed is \eqref{exprmu} for $\sigma=0$).

While trivially 
$$
\frac{\partial\lambda}{\partial u}=1,
$$
we compute, at $\sigma=0$,
$$
\begin{aligned}
-2\mu\frac{\partial\lambda}{\partial\rho}=\frac{\partial \mu^2}{\partial\rho}&=-\frac{2\theta}{\epsilon\rho^3}\\
-2\mu\frac{\partial\lambda}{\partial\theta}=\frac{\partial \mu^2}{\partial\theta}&=R+\frac{R^2}c+\frac1{\epsilon\rho^2}\\
-2\mu\frac{\partial\lambda}{\partial\sigma}=\frac{\partial \mu^2}{\partial\sigma}&=\frac{2R}{\rho c}
\end{aligned}
$$
and thus 
$$
\begin{aligned}
 -2\mu r\cdot\nabla\lambda
&=\frac{\partial\mu^2}{\partial\rho}\frac{\epsilon\rho^2}\theta
-2\mu\frac{\partial\lambda}{\partial u}\bigg(\!\!-\epsilon\frac\rho\theta\mu\bigg)
+\frac{\partial\mu^2}{\partial\theta}\frac{\epsilon R\rho}c
+\frac{\partial\mu^2}{\partial\sigma}\\
&>-\frac2\rho+2\epsilon\frac\rho\theta\mu^2
\\
&>0.
\end{aligned}
$$
This implies \eqref{GNL} and thus the assertion.
\end{proof}
{\bf Remark.} 
Analoga of Lemma 1 and Theorem 1 hold for the
isothermal version \cite{F22}
\be\label{EOMisoth}
\begin{aligned}
\rho_t+(\rho u)_x&=0\\
(\rho u)_t+(\rho u^2 +R\rho+\sigma)_x&=0\\
\epsilon\left((\rho\sigma)_t+
(\rho u \sigma)_x\right) 
+ u_x&=-\sigma/\eta.
\end{aligned}
\ee
We next consider the non-isothermal version \emph{with} heat conduction \cite{R83,F22}
\be\label{EOMwith}
\begin{aligned}
\rho_t+(\rho u)_x&=0\\
(\rho u)_t+(\rho u^2 +p(\rho,\theta)+\sigma)_x&=0\\
(\rho (e(\rho,\theta)+|u|^2/2))_t
+((\rho(e(\rho,\theta)+|u|^2/2)+p(\rho,\theta)+\sigma)u+q)_x&=0\\
\epsilon\left((\rho\sigma/\theta)_t+
(\rho u \sigma/\theta)_x\right) 
+ u_x&=-\sigma/\eta\\
\delta((\rho q/\theta^2)_t+(\rho u q/ \theta^2)_x+\theta_x&=-q/\chi,
\end{aligned}
\ee
where $q$ is the heat flux and $\chi, \delta>0$ denote the heat conductivity and another retardation 
parameter, and show 
\begin{theo}
For any given value of $\theta_*>0$ there exist $\rho_*>0$ such that for 
the reference state $(\rho_*,u_*,\theta_*,\sigma_*,q_*)$ with $\sigma_*=q_*=0$ and any $k>0$, there exist 
$C^\infty$ data $(\rho_0,u_0,\theta_0,\sigma_0,q_0)$ such that with them, the associated smooth
solution $(\rho,u,\theta,\sigma,q)$ to \eqref{EOMwith} assumes values in the ball of radius 
$k$ around $(\rho_*,u_*,\theta_*,\sigma_*,q_*)$
while it ceases to be differentiable after a finite time. 
\end{theo}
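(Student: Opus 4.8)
The plan is to obtain Theorem 2 from Proposition 1 in exactly the way Theorem 1 was obtained from Lemma 1: it suffices to produce, for a suitable reference state $(\rho_*,u_*,\theta_*,0,0)$ — which, since the sources $-\sigma/\eta$ and $-q/\chi$ vanish there, is an equilibrium — one mode \eqref{mode} of \eqref{EOMwith} that is genuinely nonlinear in the sense of \eqref{GNL}; B\"arlin's result then supplies the $C^\infty$ data and the finite-time loss of differentiability inside the ball of radius $k$ about that state. So the entire task is a five-field analogue of Lemma 1, and the only genuinely new point is why one can no longer take an \emph{arbitrary} $\rho_*>0$.

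I would first write \eqref{EOMwith} as $A^0(V)V_t+A^1(V)V_x=\tilde G(V)$ with $V=(\rho,u,\theta,\sigma,q)$, the matrices $A^0,A^1$ being those implicit in Lemma 1 bordered by one row and column carrying the new terms, namely $q_x$ in the energy balance, $\theta_x$ in the $q$-equation, and the coefficient $\delta\rho/\theta^2$. Setting $\mu=u-\lambda$ and forming $M(V,\mu)=-\lambda A^0(V)+A^1(V)$ as before, I would compute $\det M(V,\mu)$ and specialize to the equilibrium manifold $\sigma=q=0$. In contrast to Lemma 1 — where the nontrivial speeds obeyed the single affine relation \eqref{exprmu0} in $\mu^2$ — the coupling of stress and heat flux now makes $\mu^2$ a root of a genuine quadratic $P(\mu^2;\rho,\theta)=0$, equivalently there are two acoustic--thermal branches $\mu^2=\mu^2_\pm(\rho,\theta)$ alongside the linearly degenerate mode(s) at $\mu=0$. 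Fixing the branch that reduces to \eqref{exprmu0} as $\delta\to 0$, I would read off a spanning vector $r(V)$ of $\ker M_0(V,\mu)$ by a computation parallel to \eqref{evec}, and then evaluate $r\cdot\nabla\lambda$ at $\sigma=q=0$, obtaining the derivatives of $\lambda=u-\mu$ by implicitly differentiating $P=0$ rather than by solving it (which would introduce square roots). This yields a closed expression $r\cdot\nabla\lambda=g(\rho_*,\theta_*)$, real-analytic on $(0,\infty)^2$.

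The main obstacle — and the reason for the weaker quantifier in the statement — is the last step: unlike the short chain of inequalities ending the proof of Lemma 1, the sign of $g$ is no longer forced, so I must show that for each prescribed $\theta_*>0$ there is some (in fact all but finitely many) $\rho_*>0$ with $g(\rho_*,\theta_*)\neq 0$ and, simultaneously, with $\mu^2_\pm$ real, positive and distinct so that the chosen branch is indeed a smooth, genuinely nonlinear mode near that state. I would establish this by exhibiting $g(\cdot,\theta_*)$, after clearing denominators, as a polynomial in $\rho_*$ that is not the zero polynomial — for instance by isolating its dominant contribution as $\rho_*\to\infty$ or $\rho_*\to 0$, where a single term survives with a definite sign. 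With such a reference state secured, Proposition 1 applies and gives Theorem 2.
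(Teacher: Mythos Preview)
Your outline is correct and follows the same architecture as the paper: reduce Theorem~2 to a five-field analogue of Lemma~1 (the paper's Lemma~2) and then invoke Proposition~1. The differences are tactical. First, the paper passes to \emph{Lagrangian} coordinates $(\tau,u,\theta,\sigma,q)$ before computing, which removes the $u$-dependence from $M$ and makes the determinant and eigenvector calculations cleaner; you stay in Eulerian $(\rho,u,\theta,\sigma,q)$, which works but costs more algebra. Second, for the sign of the nonlinearity coefficient the paper does not argue abstractly that $g(\cdot,\theta_*)$ is a nonvanishing analytic function; instead it writes the coefficient for the \emph{fast} mode $\lambda=\lambda_+$ explicitly as a sum $N=\alpha_\tau\lambda(\lambda^2-\lambda_\tau^2)+\alpha_\theta\lambda^3(\lambda^2-\lambda_*^2)(\lambda^2-\lambda_\theta^2)+\alpha_q\lambda(\lambda^2-\lambda_*^2)^2$ with positive $\alpha$'s, observes $\lambda_+^2>\lambda_*^2$ from the root ordering, and then takes $\tau$ small enough (equivalently $\rho_*$ large) to force $\lambda_\tau^2,\lambda_\theta^2<\lambda_*^2$, making every term positive. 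This is precisely the ``dominant contribution as $\rho_*\to\infty$'' you allude to, carried out concretely. One caution: your proposed identification of the relevant branch as ``the one reducing to \eqref{exprmu0} as $\delta\to 0$'' is delicate, since that limit is singular (one root of the quadratic in $\mu^2$ runs off to infinity and the finite one does \emph{not} land on \eqref{exprmu0}); it is safer, as the paper does, simply to select the fast branch via the root ordering $0<\lambda_-^2<\lambda_*^2<\lambda_{**}^2<\lambda_+^2$ established from the signs of the characteristic quadratic at $0,\lambda_*^2,\lambda_{**}^2,\infty$.
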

In view of B\"arlin's result, Theorem 2 is a consequence of the following fact.
\begin{lemma}
For any $\theta_*>0$, there exist $\rho_*>0$ such that  \eqref{EOMwith} has a 
genuinely nonlinear mode at the reference state $(\rho_*,u_*,\theta_*,0,0)$.   
\end{lemma}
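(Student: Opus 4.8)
The plan is to follow exactly the strategy of the proof of Lemma~1, now for the larger $5\times 5$ system \eqref{EOMwith}. First I would write \eqref{EOMwith} in quasilinear form $A^0(V)V_t+A^1(V)V_x=\tilde G(V)$ with $V=(\rho,u,\theta,\sigma,q)$, and form the matrix $M(V,\mu)=-\lambda A^0+A^1$ in the variable $\mu=u-\lambda$. As in Lemma~1, I expect $\det M$ to factor as (some positive prefactor)$\,\cdot\,\mu\,\cdot\,\Pi(V,\mu)$, where the factor $\mu$ (or $\mu^2$) corresponds to the linearly degenerate contact mode and $\Pi$ is a polynomial in $\mu^2$; the genuinely nonlinear candidates are the roots $\mu^2$ of $\Pi$. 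The new feature relative to \eqref{EOM} is that, because the heat-flux equation couples $\theta_x$ and $q$, the characteristic polynomial $\Pi(V,\mu^2=0)=0$ will in general be a genuine quadratic in $\mu^2$ rather than giving a single closed formula; so I would evaluate everything \emph{at equilibrium} $\sigma_*=q_*=0$ from the outset, where the coupling decouples enough to extract the relevant acoustic-type branch explicitly.

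Concretely, at $\sigma=q=0$ the matrix $M_0(V,\mu)$ should be block-structured: the $(\rho,u,\theta,\sigma)$ block looks like the $M_0$ of Lemma~1 (with an extra column/row), and the $q$-equation contributes a row $\delta\frac{\rho}{\theta^2}\mu\,$ in the $q$-slot plus the $\theta_x$ coupling. I would compute $\det M_0(V,\mu)$ and read off that the nontrivial finite speeds satisfy an equation of the form
\be\label{planmu}
\Bigl(\mu^2-p_\rho-\tfrac{\theta p_\theta^2}{\rho^2 e_\theta}-\tfrac{\theta}{\epsilon\rho^2}\Bigr)\Bigl(\mu^2-\tfrac{\theta^2}{\delta\rho e_\theta}\Bigr)
=\text{(coupling term)},
\ee
i.e.\ a quadratic $\alpha(V)(\mu^2)^2+\beta(V)\mu^2+\gamma(V)=0$ whose coefficients I would write down explicitly, with, crucially, $\alpha=e_\theta=c>0$. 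I would then pick the larger root $\mu^2=\mu_+^2(V)$, which is a smooth function of $V$ near equilibrium (the two roots are distinct there because the discriminant is strictly positive — this is where a mild genericity of $\rho_*$ may enter), and the corresponding eigenvector $r(V)$ obtained, as in \eqref{evec}, by solving $M_0 r=0$ componentwise.

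The heart of the matter is then the genuine-nonlinearity computation $r\cdot\nabla\lambda|_{*}\neq 0$ with $\lambda=u-\mu$. Since $\partial\lambda/\partial u=1$ and $-2\mu\,\partial\lambda/\partial\xi=\partial\mu^2/\partial\xi$ for $\xi\in\{\rho,\theta,\sigma,q\}$, I would differentiate the quadratic $\alpha(\mu^2)^2+\beta\mu^2+\gamma=0$ implicitly to get $\partial\mu^2/\partial\xi=-(\mu^2\,\alpha_\xi+\mu^2_\xi\beta+\gamma_\xi)/(2\alpha\mu^2+\beta)$ — here the denominator $2\alpha\mu^2+\beta$ is nonzero precisely by the simple-root property just used — and assemble $-2\mu\, r\cdot\nabla\lambda$ as a rational expression in $(\rho,\theta)$ and the parameters $R,c,\epsilon,\delta$. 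The main obstacle is that, unlike in Lemma~1, this expression need \emph{not} be sign-definite for all $\rho_*$: the sign depends on $\rho_*$ through the interplay of the two retardation scales $\theta/(\epsilon\rho^2)$ and $\theta^2/(\delta\rho e_\theta)$. The remedy — and the reason the statement only claims existence of \emph{some} $\rho_*$ — is to exhibit an explicit limiting regime: e.g.\ fix $\theta_*$ and let $\rho_*\to 0$ (or $\rho_*\to\infty$), track the leading-order behavior of $\mu_+^2$ and of each $\partial\mu^2/\partial\xi$ and each component of $r$, and check that $r\cdot\nabla\lambda$ has a definite nonzero leading term in that limit; by continuity it is then nonzero on an open set of $\rho_*$, which proves the lemma. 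I expect the bookkeeping of which terms dominate in that limit to be the only delicate part; the algebra is otherwise parallel to Lemma~1.
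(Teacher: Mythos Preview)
Your plan is correct and follows the same overall strategy as the paper --- compute the characteristic polynomial at equilibrium, isolate the fast acoustic branch as the larger root of a quadratic in the squared speed, write down the eigenvector, and verify $r\cdot\nabla\lambda\neq 0$ by pushing the density into a limiting regime where all terms acquire a definite sign --- but the paper makes two tactical choices that streamline the computation relative to yours. First, it passes to \emph{Lagrangian} coordinates $(\tau,u,\theta,\sigma,q)$ before doing anything (genuine nonlinearity being coordinate-invariant), which removes the $u$-dependence from the principal part entirely and makes the matrix $M$ and its determinant considerably cleaner than the Eulerian $M(V,\mu)$ you propose. Second, rather than implicitly differentiating the quadratic for $\mu^2$ to extract each $\partial\mu^2/\partial\xi$ and then recombining, the paper works directly with the full characteristic polynomial $\Pi=\det M$: since $\partial\Pi/\partial\lambda\neq 0$ at a simple root, genuine nonlinearity is equivalent to $N:=r\cdot\nabla_{(\tau,\theta,\sigma,q)}\Pi\neq 0$, and this $N$ turns out to be a sum of three terms, each a positive coefficient times $\lambda$ times a product of factors of the form $(\lambda^2-\lambda_\bullet^2)$. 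The limiting argument then becomes a pure ordering statement: for $\tau$ small (i.e.\ $\rho_*$ large) all the thresholds $\lambda_\tau^2,\lambda_\theta^2$ drop below $\lambda_*^2<\lambda_+^2$, so every factor is positive for the fast mode. Your asymptotic bookkeeping would reach the same conclusion, but the paper's route avoids the rational expressions and the separate tracking of numerator and denominator that implicit differentiation would introduce. (Incidentally, the two roots of the quadratic are always strictly separated at equilibrium --- no genericity of $\rho_*$ is needed for that step.)
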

\begin{proof}
Genuine nonlinearity is invariant under coordinate changes.
We will use the equations in Lagrangian coordinates, 
\be
\begin{aligned}
\tau_t-u_y&=0\\
u_t+(p+\sigma)_y&=0\\
(e+|u|^2/2)_t+((p+\sigma)u+q)_x&=0\\
\epsilon(\sigma/\theta)_t 
+ u_y&=-\sigma/(\rho\eta)\\
\delta (q/\theta^2)_t 
+\theta_x&=-q/(\rho\chi) 
\end{aligned}
\ee
in $\tau,u,\theta,\sigma,q$.\\
(Lagrangian) characteristic speeds are those values of $\lambda$ which annihilate the determinant of 
$$
\begin{pmatrix}
-\lambda&-1&0&0&0\\
p_\tau&-\lambda&p_\theta&1&0\\
up_\tau-\lambda e_\tau&p+\sigma-\lambda u  &u p_\theta-\lambda e_\theta&u&1\\
0&1&\epsilon\sigma\frac 1{\theta^2}\lambda&-\epsilon\frac1\theta\lambda&0\\
0&0&2\delta q\frac1{\theta^3}\lambda+1&0&-\delta\frac1{\theta^2}\lambda
\end{pmatrix}
$$
or, equivalently, of 
$$
M(\tau,\theta,\sigma,q;\lambda)
=
\begin{pmatrix}
-\lambda&-1&0&0&0\\
p_\tau&-\lambda&p_\theta&1&0\\
-\lambda e_\tau&p+\sigma&-\lambda e_\theta&0&1\\
0&1&\epsilon\sigma\frac 1{\theta^2}\lambda&-\epsilon\frac1\theta\lambda&0\\
0&0&2\delta q\frac1{\theta^3}\lambda+1&0&-\delta\frac1{\theta^2}\lambda
\end{pmatrix}.
$$
At equilibrium, this matrix is 
$$
M_0(\lambda)
= M(\tau,\theta,0,0;\lambda)
=
\begin{pmatrix}
-\lambda&-1&0&0&0\\
p_\tau&-\lambda&p_\theta&1&0\\
-\lambda e_\tau&p&-\lambda e_\theta&0&1\\
0&1&0&-\epsilon\frac1\theta\lambda&0\\
0&0&1&0&-\delta\frac1{\theta^2}\lambda
\end{pmatrix},
$$
and has
$$
\det M_0(\lambda)
= 
-\frac\epsilon\theta\lambda\pi_0(\lambda)
$$
with
$$
\pi_0(\lambda^2)
=
\left\{
e_\theta\frac\delta{\theta^2}
\lambda^2[\lambda^2-\lambda_{**}^2]-[\lambda-\lambda_*^2]\}
\right\}
$$
where
$$
\lambda_*^2=-p_\tau+\frac\theta\epsilon,
\quad
\lambda^2_{**}=\lambda_*^2+\frac{\theta p_\theta^2}{e_\theta}.
$$
As 
$$
\pi_0(0)>0,\ \pi_0(\lambda_*^2)<0,\ \pi_0(\lambda_{**}^2)<0,\ \pi_0(\infty)>0,
$$
the quadratic polynomial $\pi_0$ has two (concretely given) 
real zeros $\lambda_-^2,\lambda_+^2$ 
with
\be\label{lambdaordering}
0<\lambda_-^2<\lambda_*^2<\lambda_{**}^2<\lambda_+^2. 
\ee
For any speed $\lambda\neq 0$, the kernel of $M_0(\lambda)$ 
is spanned by 
$$
r\sim\left(-\frac\epsilon\theta,\frac\epsilon\theta\lambda,
\frac\epsilon{\theta p_\theta}(\lambda^2-\lambda_*^2),1,
\frac{\epsilon\theta}{\delta p_\theta}\frac{\lambda^2-\lambda_*^2}\lambda
\right).
$$
At equilibrium, all four nontrivial speeds $-\lambda_+,-\lambda_-,\lambda_-,\lambda_+$ (like the 
trivial one)
are simple zeros, so near equilibrium they are smooth functions $\lambda(\tau,\theta,\sigma,q)$
that solve   
$$
\Pi(\tau,\theta,\sigma,q;\lambda(\tau,\theta,\sigma,q))
\equiv
\det(A(\tau,\theta,\sigma,q;\lambda(\tau,\theta,\sigma,q) )
=0.
$$
As 
$$
\frac{\partial\Pi}{\partial(\tau,\theta,\sigma,q)}
+
\frac{\partial\Pi}{\partial\lambda}\frac{\partial\lambda}{\partial(\tau,\theta,\sigma,q)}
=0
\quad\text{and}\quad
\frac{\partial\Pi}{\partial\lambda}(\tau,\theta,0,0,\lambda(\tau,\theta,0,0))\neq 0,
$$
an inequality  
\be\label{gnl}
N:=-\frac\epsilon\theta \frac{\partial\Pi}{\partial\tau}
+\frac\epsilon{\theta p_\theta}(\lambda^2-\lambda_*^2) \frac{\partial\Pi}{\partial\theta}
+
\frac{\partial\Pi}{\partial\sigma}
+
\frac{\epsilon\theta}{\delta p_\theta}\frac{\lambda^2-\lambda_*^2}\lambda
\frac{\partial\Pi}{\partial q}\neq 0
\ee
will thus imply the genuine-nonlinearity condition \eqref{GNL}.
 
Now, simple computations show that at equilibrium,
\be
\begin{aligned}
-\frac\epsilon\theta\frac{\partial\Pi}{\partial\tau}
&=
\frac{2\epsilon^2\delta R(R+c)}{\tau^3\theta^4}\lambda(\lambda^2-\lambda_\tau^2)\\
\frac\epsilon{\theta p_\theta}(\lambda^2-\lambda_*^2) \frac{\partial\Pi}{\partial\theta}
&=
\frac{\epsilon^2\delta c\tau}{R\theta}\lambda^3(\lambda^2-\lambda_*^2)(\lambda^2-\lambda_\theta^2) \\
\frac{\partial\Pi}{\partial\sigma}&=\frac{\epsilon\delta}{\theta^4}\lambda^3(p+e_\tau-\theta p_\theta)=0\\   
\frac{\epsilon\theta}{\delta p_\theta}\frac{\lambda^2-\lambda_*^2}\lambda
\frac{\partial\Pi}{\partial q}&=\frac{2\epsilon^2\tau}{R\theta^3}\lambda(\lambda^2-\lambda_*^2)^2
\end{aligned}
\ee
and thus 
\be\label{Nis}
N=
\alpha_\tau\lambda(\lambda^2-\lambda_\tau^2)
+\alpha_\theta\lambda^3(\lambda^2-\lambda_*^2)(\lambda^2-\lambda_\theta^2)
+\alpha_q\lambda(\lambda^2-\lambda_*^2)^2
\ee
with 
$$
\lambda_\tau^2=\frac{\theta^2}{\delta(R+c)},
\quad 
\lambda_\theta^2=\frac{\theta^2}{\delta c}+\frac{2\theta}\epsilon
$$
and 
\be\label{alphapositive}
\alpha_\tau,\alpha_\theta,\alpha_q>0.
\ee
As 
$$
\lambda_*^2=\frac{R\theta}{\tau^2}+\frac\theta\epsilon,
$$
we see that by choosing $\tau>0$ sufficiently small, we have
\be\label{lambdaforsmalltau}
\lambda_\tau^2,\lambda_\theta^2<\lambda_*^2.
\ee
From \eqref{lambdaordering}, \eqref{lambdaforsmalltau}, \eqref{Nis}, and \eqref{alphapositive} 
we find that for the fast mode,
$$
\lambda^2=\lambda_+^2,
$$
the nonlinearity coefficient satisfies $N\neq 0$. 
\end{proof}
{\bf Remarks.} 
(i) Geometrically, genuine nonlinearity \eqref{GNL} is of course generic on open sets in state space $\mathcal U$, 
while, in the absence of (e.g., translational or rotational) symmetries, \emph{linear degeneracy} 
(i.\ e., $r(U)\cdot\nabla \lambda(U)=0$) can generically occur only 
along hypersurfaces (of ``inflection points''). It would be interesting to know whether linear 
degeneracy does occur for this system, notably at equilibrium, for the fast mode, $\pm\lambda_+$, or the slow mode, 
$\pm\lambda_-$.    \\
(ii) Racke has announced \cite{R23} a different proof for the formation of singularities
in a similar system.

\end{document}